\newtheorem{theorem}{Theorem}
\newtheorem{lemma}{Lemma}
\newtheorem{proposition}{Proposition}
\newtheorem{definition}{Definition}
\newcommand \be{\begin{eqnarray*}}
\newcommand \ee{\end{eqnarray*}}
\newcommand \ben{\begin{eqnarray}}
\newcommand \een{\end{eqnarray}}
\def \build#1#2#3{\mathrel{\mathop{\kern 0pt#1}\limits_{#2}^{#3}}}
\def \build#1#2#3{\mathrel{\mathop{\kern 0pt#1}\limits_{#2}^{#3}}}
  \newdimen\AAdi%
\newbox\AAbo%
\def\AArm{\fam0 \rm}%
\def\AAk#1#2{\setbox\AAbo=\hbox{#2}\AAdi=\wd\AAbo\kern#1\AAdi{}}%
\def\BBone{{\AArm 1\AAk{-.8}{I}I}}%
\title{Asymptotic cost of cutting down random free trees}
\begin{document}
\markboth{ ~~\hrulefill~~E. Zohoorian Azad } {Cost of cutting down
random trees ~~\hrulefill~~ }

\author{Elahe Zohoorian Azad \\ Damghan
university of Iran }


\maketitle

\begin{abstract} In this work, we calculate
the limit distribution of the total cost incurred by splitting a
tree uniformly distributed on the set of all finite free trees,
appears as an additive functional induced by a toll equal to the
square of the size of tree. The main tools used are the recent
results connecting the asymptotics of generating functions with
the asymptotics of their Hadamard product, and the method of
moments.
\end{abstract}

 \emph{keywords.} Additive functional, Generating functions, Limit law,
 Recurrence.



\section{Introduction}

Trees are structures suitable for data storage and supporting
computer algorithms, two fundamental aspects of data processing,
with applications in many fields. The cost of
``divide-and-conquer'' algorithms can be represented as an
additive functional of trees. While there are many studies on
additive functional (see, for example,
\cite{Mahmoud,Hwang,Rosler}), not enough attention has been given
to the distributions of functional defined on trees under the
uniform model. However, a main motivation for undertaking this
investigation is that it is key to analyzing a special type of a
Drop-Push model of percolation and coagulation(see \cite{EZoh}).\\

In this paper, we consider the additive functional defined on the
trees uniformly selected from the set of all the free trees of
size $n$, for $n$ given (called \emph{Cayley} trees in
\cite{Cay}), induced by the toll sequence $(n^2)_{n\geq 0}$ (see
definition of the Section \ref{cayrec}). Our main result, Theorem
\ref{Xn}, provides the limit distribution for a suitably
normalized version of this functional.

\begin{theorem}
\label{Xn} Let $X_n$ be the additive functional defined on the
uniform free trees of size $n$, induced by the toll $(n^2)_{n\geq
0}$. Then,
$$n^{-5/2}\ X_n \build{\longrightarrow }{}{\mathcal L}\ {\sqrt 2}\ \xi,$$
where $\xi$ is a random variable whose distribution is
characterized by its moments.:
$$\mathbb{E}(\xi^k)=\frac{k!\sqrt\pi}{2^{(7k-2)/2}\Gamma(\frac{5k-1}{2})}{\bar a}_k,$$
where
$${\bar a}_k=2(5k-6)(5k-4){\bar a}_{k-1}+\sum_{j=1}^{k-1}{{\bar a}_j {\bar a}_{k-j}}
\hspace{0,5cm} k\geq 2;\hspace{0,5cm}{\bar a}_1= \sqrt 2.$$
\end{theorem}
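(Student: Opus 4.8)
\emph{Proof proposal.} The plan is to turn the cutting procedure into a distributional recurrence, pass to moment generating functions, and read off the limit law by singularity analysis together with the method of moments. First I would record the self-similar structure of the cost. Cutting a uniform free tree of size $n$ at a uniformly chosen edge produces, after rooting each part at the severed endpoint, two \emph{independent} uniform rooted trees, whose sizes $J$ and $n-J$ are governed by the rooted-tree counts $j^{j-1}$; since the cost depends only on the underlying graph, one obtains $X_n \stackrel{d}{=} n^2 + Y'_J + Y''_{n-J}$, where $Y$ denotes the cost of a uniform rooted tree and the two summands are conditionally independent. I would then encode all moments through the exponential generating functions $F_k(z)=\sum_n\big(\sum_{|T|=n}\E[X(T)^k]\big)z^n/n!$ (free) and $G_k(z)=\sum_n\big(\sum_{|R|=n}\E[Y(R)^k]\big)z^n/n!$ (rooted), noting the elementary identity $G_k=\vartheta F_k$ with $\vartheta=z\,d/dz$ (each free tree yields $n$ rooted trees of equal cost), together with $F_0=U$, $G_0=T$, where $T=ze^{T}$ and $U=T-T^2/2$.

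Expanding $(n^2+Y'_J+Y''_{n-J})^k$ by the multinomial theorem and using the labelled-product rule — the toll factor $n^2$ becoming the operator $\vartheta^2$, and the pair of subtrees a product $G_bG_c$ — the recurrence linearises into the master functional equation $(\vartheta-1)F_k=\tfrac12\sum_{a+b+c=k}\binom{k}{a,b,c}\,\vartheta^{2a}\big[(\vartheta F_b)(\vartheta F_c)\big]$. Separating the two terms that are linear in $F_k$ (those with $b=k$ or $c=k$, which combine to $T\vartheta F_k$) yields a first-order linear ODE $(1-T)\vartheta F_k-F_k=\Phi_k$, where $\Phi_k$ is an explicit expression in $F_0,\dots,F_{k-1}$ and their $\vartheta$-derivatives.

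The analytic heart is the singularity analysis of this ODE near the common dominant singularity $\rho=1/e$, where $1-T\sim\sqrt2\sqrt{1-ez}$. I would prove by induction that $F_k$ carries an algebraic singularity $F_k\sim C_k(1-ez)^{-(5k-3)/2}$; feeding this into the ODE, the most singular contribution is $(1-T)\vartheta F_k$, which pins $C_k$ to the leading singular coefficient of $\Phi_k$ through $C_k=\sqrt2\,D_k/(5k-3)$. The recent transfer theorems for Hadamard products enter precisely here, to compute the leading singular behaviour of the convolution/product terms $(\vartheta F_b)(\vartheta F_c)$ and of their images under $\vartheta^{2a}$, and to pass from singular expansions to coefficient asymptotics uniformly enough to legitimise the induction. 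Exactly two families of terms survive at the maximal singular order $(1-ez)^{-(5k-2)/2}$: the branching products with $a=0$, $b,c\ge1$, which assemble the convolution $\sum_j \bar a_j\bar a_{k-j}$; and the terms with $a=1$ and $\{b,c\}=\{k-1,0\}$, where the single toll operator $\vartheta^2$ acts on $F_{k-1}$ and brings down the two consecutive exponents responsible for the factor $(5k-6)(5k-4)\bar a_{k-1}$. After rescaling $C_k$ by the explicit gamma and power-of-two normalisation, this reproduces exactly the stated recurrence for $\bar a_k$.

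Finally I would extract moments via $\E[X_n^k]=[z^n]F_k/[z^n]F_0\sim \text{const}\cdot n^{5k/2}$, computing the constant as $\E[(\sqrt2\,\xi)^k]=\sqrt2\sqrt\pi\,C_k/\Gamma((5k-3)/2)$; the apparent mismatch with the denominator $\Gamma((5k-1)/2)$ in the statement is reconciled by $\Gamma(\tfrac{5k-1}{2})=\tfrac{5k-3}{2}\,\Gamma(\tfrac{5k-3}{2})$, the factor $5k-3$ being precisely the one dividing $D_k$. Since the resulting moments grow slowly enough (of order $(k!)^{1/2}$), Carleman's condition holds, so they determine a unique law and convergence of moments upgrades to convergence in distribution. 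I expect the main obstacle to be the constant-tracking inside the singularity analysis: the structural form of the recurrence is forced, but recovering the exact coefficients $2$ and $1$, the powers of $2$, and the gamma factors requires a careful application of the Hadamard-product asymptotics and a uniform control of the error terms throughout the induction on $k$.
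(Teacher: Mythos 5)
Your proposal is correct and follows essentially the same route as the paper: your operator $\vartheta^{2a}$ is exactly the paper's Hadamard product with $B(z)^{\odot a}=Li_{-2a,0}(z)$, your first-order linear ODE solved near $z=1/e$ is the paper's equation (\ref{recHada2}) obtained by variation of constants, and the two dominant singular families you isolate (the branching products with $a=0$, $b,c\ge 1$, and the toll term $a=1$, $\{b,c\}=\{k-1,0\}$) are precisely cases (I) and (IV) in the proof of Proposition \ref{prop}. The only step you leave implicit is the growth estimate on the moment coefficients needed to justify moment determinacy (you invoke Carleman, the paper uses \cite[Theorem 30.1]{Bil}); the paper supplies this as Lemma \ref{const1} by a direct induction on the recurrence (\ref{recprop}).
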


Curiously, the moments of our limit distribution are proportional
to the moments of the distribution of the average of the minimum
of a normalized Brownian Excursion, obtained by \cite[Theorem
3.3]{Jan}.

In what follows, $e=\left(e(t)\right)_{0\leq t\leq 1}$
 indicates a normalized Brownian Excursion.
\begin{theorem}
\label{Janson} The moments of the random variable $\eta$, defined
by
$$\eta = 4 \int\int_{ 0<s<t<1} \min_{ s\leq u\leq t} e(u) ds dt,$$
are given by the formula
$$\mathbb
E(\eta^k)=\frac{k!\sqrt\pi}{2^{(7k-4)/2}\Gamma(\frac{5k-1}{2})}\
{\omega}_k,$$ where
$${\omega}_k=2(5k-6)(5k-4){\omega}_{k-1}+\sum_{j=1}^{k-1}{{\omega}_j {\omega}_{k-j}}
\hspace{0,5cm} k\geq 2;\hspace{0,5cm}{\omega}_1=1.$$
\end{theorem}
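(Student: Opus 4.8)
The plan is to prove the moment formula by the method of moments, realizing $\eta$ as a functional of $2k$ independent uniform marks on the continuum random tree (CRT) coded by $e$, and then extracting the recursion from the branching structure of the spanned subtree. First I would rewrite the double integral probabilistically: since $\min_{s\le u\le t}e(u)$ is the height (distance to the root) of the branch point separating the two points of $[0,1]$ visited at times $s$ and $t$ in the CRT, and since $\{0<s<t<1\}$ carries half the mass of two independent uniform times, one has $\eta = 2\,\mathbb{E}\big[\,h(b_{U,V})\,\big|\,e\,\big]$, where $U,V$ are independent uniform on $[0,1]$, $b_{U,V}$ is their branch point and $h$ its height. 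Introducing $k$ independent pairs $(U_i,V_i)$ and taking expectations gives
$$\mathbb{E}(\eta^k)=2^k\,\mathbb{E}\Big[\prod_{i=1}^{k} h(b_{U_i,V_i})\Big],$$
an expectation over $e$ and over $2k$ i.i.d.\ uniform marks; the factor $k!$ eventually appearing in the statement comes from symmetrizing these $k$ exchangeable pairs.

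Next I would invoke Aldous's description of the subtree $R_{2k}$ spanned by the root and the $2k$ marks: its combinatorial shape ranges over binary trees with $2k$ labelled leaves, and, for each fixed shape, the vector of its $4k-1$ edge lengths has density $\ell\,e^{-\ell^2/2}$ against Lebesgue measure, where $\ell$ is the total length (a factor $1/2$ enters because $e$ codes half the CRT metric; this already reproduces $\mathbb{E}(\eta)=\sqrt{\pi/8}$, i.e.\ $\omega_1=1$). Each height $h(b_{U_i,V_i})$ is a sum of edge lengths along a root-to-branch-point path, so the product is a homogeneous polynomial of degree $k$ in the edge lengths. Integrating it against $\ell\,e^{-\ell^2/2}$ reduces, after scaling out $\ell$, to the Gaussian moments $\int_0^\infty \ell^{p}e^{-\ell^2/2}\,d\ell=2^{(p-1)/2}\Gamma(\tfrac{p+1}{2})$, which is where the Gamma-function prefactor $\sqrt{\pi}\,2^{-(7k-4)/2}/\Gamma(\tfrac{5k-1}{2})$ of the statement originates, leaving a purely combinatorial quantity $\omega_k$ to be identified.

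The core of the argument is to show that $\omega_k$ obeys the stated recursion. For this I would decompose $R_{2k}$ at the branch point nearest the root, i.e.\ at the global minimum of $e$ over the span of the marks: a single spinal edge descends to this vertex, above which the tree splits into two reduced subtrees receiving $j$ and $2k-j$ of the marks. A pair $(U_i,V_i)$ whose marks fall in the same subtree contributes its own branch-point height inside that subtree, whereas a straddling pair has branch point exactly at this root-side vertex, so its height equals the spinal contribution and must be integrated one extra power. The first mechanism yields the convolution $\sum_{j=1}^{k-1}\omega_j\omega_{k-j}$; the straddling pairs together with the extra spinal integration raise the homogeneity by one and, through the Gamma identity relating $\Gamma(\tfrac{5k-1}{2})$ to $\Gamma(\tfrac{5(k-1)-1}{2})$, generate the linear term $2(5k-6)(5k-4)\omega_{k-1}$. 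Matching constants across the two contributions and checking the base case $k=1$ completes the induction; a final appeal to Carleman's condition shows the moments determine a unique law, although Theorem \ref{Janson} asks only for the moment formula.

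The main obstacle I anticipate is precisely this last bookkeeping: enumerating, for each partition of the $2k$ marks across the two subtrees, which of the $k$ pairs straddle and which are internal, and tracking how the spinal edge length couples to the two subtrees inside the single Gaussian weight $\ell\,e^{-\ell^2/2}$ (the total length $\ell$ is shared, so the subtrees are not independent before conditioning on $\ell$). Disentangling this shared-length coupling and verifying that the accumulated powers of $2$ and the Gamma-ratios collapse to exactly $2(5k-6)(5k-4)$ is the delicate point. A clean way to organize it is to pass to the generating function $\sum_{k\ge1}\omega_k z^k$, for which the convolution becomes a square and the polynomial-in-$k$ coefficient becomes a second-order differential operator, turning the recursion into a nonlinear (Riccati-type) ODE of the same family as the one governing the moments of the Brownian excursion area.
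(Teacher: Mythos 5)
The paper itself contains no proof of Theorem \ref{Janson}: it is quoted verbatim from Janson's Theorem 3.3 and serves only to exhibit the coincidence of moments with Theorem \ref{Xn}. The proof you would find in Janson's paper is essentially discrete and parallel to the present article's treatment of $X_n$: since $\sum_{v,w} d(v\wedge w,\mathrm{root})=\sum_{u\neq \mathrm{root}}|T_u|^2$, the quantity $\eta$ is the scaling limit of the toll-$n^2$ additive functional on conditioned Galton--Watson trees, and the recurrence for $\omega_k$ is extracted from a quadratic recurrence of moment generating functions by singularity analysis, exactly as in Proposition \ref{prop}. Your route through Aldous's reduced trees is therefore genuinely different, and its first steps are sound: the identity $\mathbb E(\eta^k)=2^k\,\mathbb E\bigl[\prod_{i=1}^k h(b_{U_i,V_i})\bigr]$ is correct, and the density $\ell e^{-\ell^2/2}$ for the subtree spanned by $2k$ uniform marks is the right tool. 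One inaccuracy in the bookkeeping you should fix: the prefactor $\sqrt\pi/\Gamma(\frac{5k-1}{2})$ does not come from the Gaussian moment alone, which yields $\int_0^\infty \ell^{5k-1}e^{-\ell^2/2}d\ell=2^{(5k-2)/2}\Gamma(5k/2)$; it arises only after combining this with the Dirichlet integral $1/\Gamma(5k-1)$ over the length simplex and applying the duplication formula.

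The genuine gap is the recursion itself, which is asserted rather than derived. The linear term $2(5k-6)(5k-4)\,\omega_{k-1}$ has the structure of exactly one pair being consumed at the decomposition point (it is the continuum shadow of the terms with $k_3=1$ in the discrete recurrence), whereas in your decomposition at the branch point nearest the root an arbitrary number of the $k$ pairs may straddle it, and you give no argument showing that all straddling configurations collapse to a contribution proportional to $\omega_{k-1}$ alone. Likewise, the convolution $\sum_{j}\omega_j\omega_{k-j}$ requires the two subtrees above the branch point to decouple, but, as you yourself observe, the weight $\ell e^{-\ell^2/2}$ ties all edge lengths together through the shared total length $\ell$, so no factorization is available without a further (nontrivial) computation conditioning on the split of $\ell$; this is precisely where the Gamma-ratio $\Gamma(\frac{5k-1}{2})/\Gamma(\frac{5k-6}{2})$ must be produced, and you do not produce it. Your proposed remedy --- passing to the generating function $\sum_k\omega_k z^k$ and a Riccati-type ODE --- is not a repair of this decomposition but a switch to the analytic proof, at which point the continuum setup has done no work. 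As it stands the central identity of the theorem is not established.
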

It is not unusual, in this kind of problem to have more than one
characterization of a limit distribution. For instance, the
\emph{Wiener index }of certain trees is given by its moments
involving \emph{Airy} functions, and is alternatively
characterized in terms of a Brownian.\\

For the demonstration of Theorem \ref{Xn}, we apply the strategy
used in \cite{FK} to obtain the limiting distributions of the
additive functionals defined on Catalan trees, in particular the
singularity analysis of the generating series \cite{FO}. Indeed,
the Hadamard products appear naturally when one analyzes the
moments of additive functionals of trees. Theorem \ref{Xn} extends
to the moments of all order, although the analysis of asymptotic
behavior of the first moment, was made already in \cite{FFK}. The
steps taken here allow a rather mechanical calculation of
asymptotic moments of each order, thus facilitating the
application of the method of moments.

\section{Generating functions}
\label{cayrec}
We first establish here some notation. Let $T$ be a binary tree
and let $|T|$ denote the number of nodes in $T$. Suppose moreover
that $L (T) $ and $R (T) $ indicate, respectively, the left and
right subtrees rooted at the 2 children of the root of $T$. When
the tree is not binary, one can still have two subtrees $L(T)$ and
$R(T)$, by cutting an edge which can be considered as root.

\begin{definition}
 A functional $f$ defined on a binary tree is called an \emph{additive functional}
if it satisfies the recurrence $$f(T ) = f(L(T )) + f(R(T )) +
b_{|T|},$$ for any tree $T$ with $|T|\geq 1$. Here $(b_n)_{n \geq
1}$ is a given sequence, henceforth called the toll function.
\end{definition}

We analyze here a special additive functional on the trees,
uniformly distributed on $\{T:|T|=n\}$, for $n$ given . By a
result attributed to Cayley \cite{Cay}, there are $U_n = n^{n-2}$
\emph{free} trees ($U_{n}$ connected acyclic labelled graphs) on
$n$ nodes and accordingly, there are $T_n = n^{n-1}$ rooted trees
(in which a labelled node, is called \emph{root} of tree).
Consider the model in which initially each free tree of size $n$
is taken uniformly at random. Choose an edge at random among the
$n - 1$ edges of the tree, orient it in a random way, then cut it.
This separates the tree into an ordered pair of smaller trees,
that are now rooted; we call them the left and right subtrees.
Continue the process with each of the resulting subtree,
discarding the root. Assume\footnote{One can see \cite[Proposition
1]{EZoh} for the main motivation of giving this assumption.
Briefly, \cite{EZoh} analyzes a Drop-Push model of coagulation in
which particles are dropped onto a one dimensional lattice and
carry out a random walk until they encounter an empty site where
they become stuck. In such a model, the movements of the
particles, on the lattice, form an additive coalescence processes
which gives the good algorithmic reasons for considering the
recurrence (\ref{rec1}). In fact, in the Drop-Push model, the cost
of coalescence of two clusters of particles, at the dropping
moment of a particle, is given as the number of steps of the
particle until it sticks in an empty site and it is proven,
\cite[relation (8)]{EZoh}, that the expected cost of coalescence
of two clusters is proportional to the square of the length of the
cluster on which a particle drops.} that the cost incurred by
selecting the edge and splitting the tree in a tree of size $n$ is
$n^2$. Then $X_{n}$, the total cost incurred for splitting a
random tree of size $n$, satisfies, for $n \geq 1$, the recurrence

\begin{equation}
\label{rec1}X_{n} = X_{L_{n}} + X_{R_{n}} + n^2,
\end{equation}
where the indexes $L_{n}$ and $R_{n}$ are, respectively, the sizes
of left and right subtrees, obtained by division of the initial
tree of size $n$. So $X_{n}$ appears as the additive functional
induced by the toll sequence $(n^2)_{n\geq 1}$.\\

A motivation, coming from the analysis of algorithms, is as
follows. If time is reversed, this model described the evolution
of a random graph, from a graph completely disconnected to a tree
and which was used to analyze of the \emph{union-find} algorithms
\cite{CLR, Se, VF}. Knuth and Sch{\"o}nhage provided a first
analysis of it in 1978 (\cite{KS}), for different tolls
however.\\

Let $p_{n,k}$ be the probability for a tree of size $n$ to have
the left and right subtrees respectively of sizes $k$ and $n-k$.
Then
\begin{equation}
\label{probcond} p_{n,k}\ =\ {{n}\choose{k}}\ \frac{k^{k-1}
(n-k)^{n-k-1}}{2(n-1)n^{n-2}}.
\end{equation}
The binomial coefficient ${{n}\choose{k}}$ takes into account the
labelling of the left and right subtrees, and the quantity
$k^{k-1} (n-k)^{n-k-1}$ is the number of rooted trees of sizes $k$
and $n-k$. In the denominator, $n^{n-2}$ is the number of free
trees, $n-1$ is the number of the edges of the initial tree, and
finally the coefficient $2$ corresponds to the random orientation
of the selected edge. It is convenient to write this probability
in the form:
$$p_{n,k}\ =\ {\frac{n}{2(n-1)}\ \frac{c_k
c_{n-k}}{c_n}},$$
where, $\forall k \geq 1$, $$c_k\ =\ \frac{k^{k-1}}{k!}.$$\\

Let us start with the average of the cost function, $a_n:=\mathbb
E(X_n)$, $n \geq 1$, which is obtained recursively by conditioning
on the size of $L_{n}$:
\begin{eqnarray*}
a_n&=&\mathbb E\left[\mathbb E_L(X_L+X_{n-L}+n^2)\right]
\\
&=&\mathbb E_L (a_L+a_{n-L})+n^2
\\
&=&\sum_{j=1}^{n-1}p_{n,j}(a_j +a_{n-j})+n^2
\\
&=& \sum_{j=1}^{n-1}{\frac{n}{2(n-1)}\frac{c_j
c_{n-j}}{c_n}}(a_j+a_{n-j})+n^2.
\end{eqnarray*}
This recurrence can be rewritten as
\begin{equation}
\label{rec2}\frac{n-1}{n}c_n a_n =\sum_{j=1}^{n-1}{c_j a_j
c_{n-j}}+\frac{n-1}{n}c_n b_n,
\end{equation}
where $b_n=n^2$.\\

\emph{Remark.} We replaced $n^2$ by $b_n$, distinguishing the
general form of the generating function, so that one can always
consider
any toll function in the place of $n^2$.\\

\begin{definition}  The Hadamard product of two entire series
$F(z)=\sum_n f_n z^n$ and $G(z)=\sum_n g_n z^n$, denoted
$F(z)\odot G(z)$, is the entire series defined by
 $$(F \odot G)(z)\equiv F(z) \odot G(z):=\sum_n f_n g_n z^n.$$
\end{definition}

Multiplying the equality (\ref{rec2}) by $z^n/e^n$ and summing
over $n \geq 1$, we get
\begin{eqnarray}
A(z)&\odot&C(z/e)-\int_0^z \sum_{n} a_n c_n \frac{\omega^n}{e^n}
\frac{d\omega}{\omega}
\\
{{\lefteqn{\label{difren}=(A(z)\odot C(z/e))C(z/e)+
\sum_{n}\frac{n-1}{n} c_n b_n \frac{z^n}{e^n},}}}
\end{eqnarray}
 where $A(z)$ and
$C(z)$ denote the ordinary generating function of $(a_n)_{n\geq
1}$ and
$(c_n)_{n\geq 1}$, respectively.\\

In view of a result of Knuth and Pittel, \cite{KP}, we know the
singular expansion at the dominant singularity $z = e^{-1}$ of
$C(z)$:
\begin{equation}
\label{caygene}C(z) = 1 - \sqrt 2(1 - ez)^{1/2} + O(|1 - ez|).
\end{equation}
Moreover $C$ satisfies the functional relation $C(z) =z e^{C(z)}$.
\\

By differentiation, the relation (\ref{difren}) transforms into a
linear differential equation of the first order, which can be
readily solved by the variation-of-constants method. Briefly,
putting $f(z):=A(z)\odot C(z/e)$ and
$t(z):=\sum_{n}\frac{n-1}{n}c_n b_ne^{-n} z^n$, the relation
(\ref{difren}) takes the form
\begin{equation}
\label{differ}\int_0^z f(\omega)
\frac{d\omega}{\omega}=f(z)(1-C(z/e))-t(z).
\end{equation}
By taking derivatives, we obtain
$$\frac{df(z)}{dz}+f(z){\left(\frac{-1/z-\frac{dC(z/e)}{dz}}
{1-C(z/e)}\right)}={\left(\frac{1}{1-C(z/e)}\right)\frac{dt(z)}{dz}}.$$
On the other hand, the equality $C(z/e)=\frac{z}{e}e^{C(z/e)}$
implies
$$\frac{dC(z/e)}{dz}={C(z/e)}\left(\frac1z+\frac{dC(z/e)}{dz}\right).$$
Assuming now (without loss of generality) the initial condition
$a_1 c_1 =b_1= 0$, the solution found will be in the form
\begin{equation}
\label{recHada}A(z)\odot C(z/e)=\frac{C(z/e)}{1-C(z/e)}\int_0^z
\partial_\omega(\sum_{n}\frac{n-1}{n} c_n b_n \frac{\omega
^n}{e^n})\frac{d\omega}{C(\omega/e)}.
\end{equation}
And finally as $\frac{n-1}{n} c_n=\sum_{j=1}^{n-1}\frac{1}{2}c_j
c_{n-j}$, we have
\begin{equation}
\label{recHada1}A(z)\odot
C(z/e)=\frac{1}{2}\frac{C(z/e)}{1-C(z/e)}\int_0^z
\partial_\omega[B(\omega)\odot
C(\omega/e)^2]\frac{d\omega}{C(\omega/e)},
\end{equation}
where $B(\omega)$ denote the ordinary generating function of
$(b_n)_{n\geq 1}$.

\section{Moments by singularity analysis}
\label{asmean1}

Thanks to the singularity analysis technique, we can derive the
asymptotics of moments of each order. The singularity analysis is
a systematic \emph{complex-analytic} technique that relates the
asymptotic behavior of sequences to the behavior of their
generating functions in the proximity of their singularities. The
applicability of singular analysis rests on a technical condition:
the $\Delta$-\emph{regularity}. See \cite{FFK,FO}
for more details.\\

\begin{definition}
\label{delta-r} A function defined by a Taylor series about the
origin with radius of convergence equal to 1 is $\Delta$-{regular}
if it can be analytically continued in a domain of form
$$\Delta(\phi,\eta):=\{z:|z|<1+\eta,|\arg(z-1)|>\phi\},$$
for some $\eta>0$ and $0<\phi<\pi/2$. A function $f$ is said to
admit a singular expansion at $z = 1$, if it is $\Delta$-regular
and if one can find a sequence of complex numbers $(c_j)_{0\leq
j\leq J}$, and an increasing sequence of real numbers
$(\alpha_j)_{0\leq j\leq J}$, satisfying $\alpha_j <A$, where $A$
is a real number, such that the relation
$$
f(z)= \sum_{j=0}^J c_j (1-z)^{\alpha_j} + O(|1-z|^A)
$$
holds uniformly in $z\in\Delta(\phi,\eta)$. It is said to satisfy
a singular expansion with \emph{logarithmic terms} if,
$$f(z)=\sum_{j=0}^J c_j (L(z))(1-z)^{\alpha_j}
+ O(|1-z|^A),\hspace{.3cm}L(z):=\log \frac{1}{1-z},$$ where each
$c_j (.)$ is a polynomial.
\end{definition}

Recall the definition of the generalized polylogarithm:

\begin{definition}
\label{Li}
 For $\alpha$ an arbitrary complex number and $r$ a nonnegative integer,
the generalized polylogarithm function $Li_{\alpha,r}$ is defined
for $|z|< 1$, by
$$Li_{\alpha,r}(z):=\sum_{n\geq 1}\frac{(\log
n)^r}{n^{\alpha}}z^n.$$
\end{definition}
In particular, $Li_{1,0}(z)=L(z)$. Moreover, a useful property of
generalized polylogarithm functions is
$$Li_{\alpha,r}\odot Li_{\beta,s}=Li_{\alpha+\beta,r+s}.$$

The singular expansion of the polylogarithm involves the Riemann
zeta function (see for example \cite[Theorem 4]{FFK}).

\begin{lemma}
\label{expan Li} The function $Li_{\alpha,r}(z)$ is
$\Delta$-regular, and for $\alpha\notin\{1, 2, \dots \}$ it
satisfies the singular expansion
\begin{equation}
\label{Li 1} Li_{\alpha,0}(z)\sim \Gamma(1-\alpha)t^{\alpha-1} +
\sum_{j\geq0}\frac{(-1)^j}{j!}\zeta(\alpha-j)t^j,
\end{equation}

where
\[t=-\log z =\sum_{l\geq 1} \frac{(1-z)^l}{l}.\]
For $r > 0$, the singular expansion of $Li_{\alpha,r}$ is obtained
 using formal derivations:
$$Li_{\alpha,r}(z)=(-1)^r \frac{\partial ^ r}{\partial \alpha ^r}Li_{\alpha,0}(z).$$
\end{lemma}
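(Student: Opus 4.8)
The plan is to derive the expansion \eqref{Li 1} by the Mellin-transform method—the standard analytic route for polylogarithms—and then to recover the case $r>0$ by differentiation in the parameter $\alpha$. First I would substitute $z=e^{-t}$ and regard
$$Li_{\alpha,0}(e^{-t})=\sum_{n\geq 1}\frac{e^{-nt}}{n^{\alpha}}$$
as a harmonic sum in $t$. Computing its Mellin transform term by term, using $\int_0^\infty e^{-nt}t^{s-1}\,dt=\Gamma(s)n^{-s}$, yields the closed form $\Gamma(s)\,\zeta(s+\alpha)$, valid in a vertical strip where both factors converge.

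Next I would apply Mellin inversion together with the converse mapping theorem: the asymptotic expansion of $Li_{\alpha,0}(e^{-t})$ as $t\to 0^+$ is read off from the poles of $\Gamma(s)\zeta(s+\alpha)$ lying to the left of the inversion contour, each pole $s_0$ contributing $\mathrm{Res}_{s=s_0}\bigl[\Gamma(s)\zeta(s+\alpha)\,t^{-s}\bigr]$. The factor $\zeta(s+\alpha)$ supplies a simple pole at $s=1-\alpha$ with residue $1$, producing the term $\Gamma(1-\alpha)\,t^{\alpha-1}$; the factor $\Gamma(s)$ supplies simple poles at $s=-j$, $j\geq 0$, with residues $(-1)^j/j!$, producing the terms $\tfrac{(-1)^j}{j!}\zeta(\alpha-j)\,t^{j}$. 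Summing these gives exactly the right-hand side of \eqref{Li 1}. The hypothesis $\alpha\notin\{1,2,\dots\}$ is precisely what keeps the pole $s=1-\alpha$ simple and disjoint from the integer poles of $\Gamma$: when $\alpha$ is a positive integer the two pole families collide, a double pole appears, and a logarithmic factor is generated, which is why that case is excluded here.

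To place this inside the $\Delta$-regular framework of Definition \ref{delta-r}, I would note that $Li_{\alpha,0}$ is analytic on $\mathbb{C}\setminus[1,\infty)$, hence continues analytically into a domain $\Delta(\phi,\eta)$, and that the Mellin estimates deliver the error term uniformly on such a sector and not merely along the positive real axis; I then change variables back through $t=-\log z=\sum_{l\geq 1}(1-z)^l/l$. For $r>0$, I would first verify the formal identity $Li_{\alpha,r}(z)=(-1)^r\partial_\alpha^r Li_{\alpha,0}(z)$ directly from the series, since $\partial_\alpha\,n^{-\alpha}=-(\log n)\,n^{-\alpha}$, and then differentiate the established singular expansion $r$ times in $\alpha$; writing $\Gamma(1-\alpha)t^{\alpha-1}=\Gamma(1-\alpha)\,e^{(\alpha-1)\log t}$ shows how the derivatives generate the polynomial coefficients $c_j(\cdot)$ in $L(z)$ of the logarithmic singular expansion.

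I expect the main obstacle to be the justification that differentiation with respect to $\alpha$ commutes with the asymptotic expansion and that all estimates remain uniform throughout the $\Delta$-domain rather than just as $t\to 0^+$ on the real axis. This is exactly the point where the uniform bounds furnished by the Mellin and singularity-analysis machinery of \cite{FFK,FO} are indispensable.
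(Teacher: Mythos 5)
The paper does not actually prove this lemma: it is quoted from \cite[Theorem 4]{FFK} (going back to Flajolet's work on polylogarithms and ultimately to the classical Jonqui\`ere--Ford expansion), so there is no internal proof to compare against. Your Mellin-transform derivation is precisely the standard argument behind that cited result: the transform of $\sum_{n\ge 1}n^{-\alpha}e^{-nt}$ is $\Gamma(s)\zeta(s+\alpha)$; the simple pole of $\zeta$ at $s=1-\alpha$ (residue $1$) yields $\Gamma(1-\alpha)t^{\alpha-1}$; the poles of $\Gamma$ at $s=-j$ yield $\frac{(-1)^j}{j!}\zeta(\alpha-j)t^{j}$; and the exclusion $\alpha\notin\{1,2,\dots\}$ is exactly the no-collision condition you identify. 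The identity $Li_{\alpha,r}=(-1)^r\partial_\alpha^{\,r}Li_{\alpha,0}$ is indeed immediate from the series. The one point you leave genuinely open --- and correctly flag --- is $\Delta$-regularity together with uniformity of the expansion off the real axis: the converse mapping theorem as usually stated gives the expansion only as $t\to 0^{+}$ along the reals, and the analyticity of $Li_{\alpha,0}$ beyond the unit disk is asserted rather than derived. In the cited source this is handled by (i) establishing the analytic continuation to a $\Delta$-domain via the Lindel\"of integral representation
$$Li_{\alpha,0}(z)=-\frac{1}{2i\pi}\int_{1/2-i\infty}^{1/2+i\infty}\frac{(-z)^{s}}{s^{\alpha}}\,\frac{\pi}{\sin \pi s}\,ds,$$
and (ii) noting that the Mellin contour shift remains legitimate for complex $t$ in a sector $|\arg t|\le \frac{\pi}{2}-\epsilon$, because $|t^{-s}|=|t|^{-\Re s}e^{(\Im s)\arg t}$ is still dominated by the exponential decay of $\Gamma(s)\zeta(s+\alpha)$ along vertical lines; the image of such a sector under $z=e^{-t}$ covers a domain $\Delta(\phi,\eta)$. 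With that supplement your outline is a complete proof and coincides with the one in the reference the paper relies on.
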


A natural consequence of this lemma (which is a particular case of
\cite[Lemme 2.6]{FK}), is that
\begin{equation}
\label{Li 2}Li_{\alpha,0}(z)=\Gamma(1-\alpha)(1-z)^{\alpha-1}
+O(|1-z|^{\alpha})+\zeta(\alpha)\BBone_{\alpha
>0};\hspace{0.4cm}\alpha < 1.
\end{equation}

\vspace{0.4cm} Another result, which is very useful in what
follows, is the decomposition of the Hadamard product of
$(1-z)^a\odot(1-z)^b$ (cf. \cite[Proposition 8]{FFK}).

\begin{lemma}
\label{decomposition}
 For the real numbers $a$ and $b$,
$$(1-z)^a\odot(1-z)^b\thicksim \sum_{k\geq 0}
{\lambda_k}^{(a,b)}\frac{(1-z)^k}{k!}+\sum_{k\geq
0}{\mu_k}^{(a,b)}\frac{(1-z)^{a+b+1+k}}{k!},$$ where the
coefficients $\lambda$ and $\mu$ are given by
$${\lambda_k}^{(a,b)}=\frac{\Gamma(1+a+b)}{\Gamma(1+a)\Gamma(1+b)}
\frac{(-a)^{\bar k}(-b)^{\bar k}}{(-a-b)^{\bar k}},$$

$${\mu_k}^{(a,b)}=\frac{\Gamma(-1-a-b)}{\Gamma(-a)\Gamma(-b)}
\frac{(1+a)^{\bar k}(1+b)^{\bar k}}{(2+a+b)^{\bar k}},$$ where
$x^{\bar k}$ is defined as $x(x+1)\dots(x+k-1)$, for $k$
nonnegative entire.
\end{lemma}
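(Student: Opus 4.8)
The plan is to reduce the statement to the classical analytic continuation of the Gauss hypergeometric function. First I would compute the Taylor coefficients of the two factors: from the binomial theorem together with the rising-factorial notation of the lemma, $[z^n](1-z)^a = (-a)^{\bar n}/n!$, and likewise for $b$. Since the Hadamard product multiplies coefficients, one gets
$$[z^n]\left((1-z)^a\odot(1-z)^b\right)=\frac{(-a)^{\bar n}\,(-b)^{\bar n}}{(n!)^2}.$$
Comparing this with the series ${}_2F_1(\alpha,\beta;\gamma;z)=\sum_{n\geq 0}\frac{\alpha^{\bar n}\beta^{\bar n}}{\gamma^{\bar n}}\frac{z^n}{n!}$ and using $1^{\bar n}=n!$, I would identify the Hadamard product exactly as a hypergeometric function,
$$(1-z)^a\odot(1-z)^b={}_2F_1(-a,-b;1;z).$$

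Next I would invoke the connection formula expressing ${}_2F_1(\alpha,\beta;\gamma;z)$ near $z=1$ as a combination of two hypergeometric series in the local variable $1-z$, valid when $\gamma-\alpha-\beta\notin\mathbb{Z}$. Specializing to $\alpha=-a$, $\beta=-b$, $\gamma=1$ gives $\gamma-\alpha-\beta=1+a+b$, $\gamma-\alpha=1+a$ and $\gamma-\beta=1+b$, so that the two resulting series are ${}_2F_1(-a,-b;-a-b;1-z)$ and $(1-z)^{1+a+b}\,{}_2F_1(1+a,1+b;2+a+b;1-z)$, carrying the prefactors $\frac{\Gamma(1+a+b)}{\Gamma(1+a)\Gamma(1+b)}$ and $\frac{\Gamma(-1-a-b)}{\Gamma(-a)\Gamma(-b)}$ respectively. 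Writing each hypergeometric series out coefficient by coefficient then reproduces precisely the two families $\lambda_k^{(a,b)}\frac{(1-z)^k}{k!}$ and $\mu_k^{(a,b)}\frac{(1-z)^{a+b+1+k}}{k!}$ of the statement; this term-matching is the bulk of the work but is entirely mechanical once the identification above is in place.

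The final step is to phrase this exact identity as a singular expansion in the sense of Definition \ref{delta-r}: one checks that ${}_2F_1(-a,-b;1;z)$ is $\Delta$-regular (the two local series converge for $|1-z|<1$ and continue the function to a slit neighbourhood of $1$), and that truncating each series at order $k$ leaves an error of the announced next order, uniformly on a $\Delta$-domain. I expect the genuinely delicate point to be the degenerate cases where $1+a+b$ is a nonnegative integer (or $a,b$ are themselves nonnegative integers), for which the gamma prefactors become singular and logarithmic terms enter the connection formula; there the formula must be read in the limiting, analytic-continuation sense, and the generic hypothesis on $a,b$ keeping every $\Gamma$-value finite is implicitly required. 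As a more robust alternative --- the one native to this Hadamard-product setting --- I could instead start from the integral representation $(F\odot G)(z)=\frac{1}{2\pi i}\oint F(w)G(z/w)\,dw/w$, deform the contour around the two singularities $w=1$ and $w=z$ of the integrand, and read off the analytic part (the $\lambda_k$ terms, from $w=1$) and the singular part (the $\mu_k$ terms, from $w=z$) by local analysis, exactly as in \cite{FFK}.
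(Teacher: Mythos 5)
Your argument is correct, and it is worth noting that the paper itself offers no proof of this lemma: it is imported verbatim as Proposition~8 of Fill--Flajolet--Kapur \cite{FFK}, and the route you take --- identifying $[z^n]\bigl((1-z)^a\odot(1-z)^b\bigr)=\frac{(-a)^{\bar n}(-b)^{\bar n}}{(n!)^2}$ so that the Hadamard product is exactly ${}_2F_1(-a,-b;1;z)$, then applying the classical $z\mapsto 1-z$ connection formula with $\gamma-\alpha-\beta=1+a+b$ --- is essentially the proof given in that reference. Your prefactor and parameter bookkeeping checks out against the stated $\lambda_k^{(a,b)}$ and $\mu_k^{(a,b)}$, and you are right to flag the degenerate cases: the connection formula as written requires $a+b\notin\mathbb{Z}$ (otherwise logarithmic terms appear), and when $a$ or $b$ is a negative integer the reflection through $1/\Gamma(1+a)$ kills the $\lambda$-part, which is in fact the situation in the paper's own application (e.g.\ $a=-3$, $b=\tfrac12$ in Lemma~\ref{mean}), where the formula degenerates gracefully. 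The only substantive point your write-up leaves implicit is the uniformity of the error terms on a $\Delta$-domain, which is what makes the expansion usable for transfer; this follows either from the convergence of the two local hypergeometric series for $|1-z|<1$ or, more robustly, from the contour-integral representation of the Hadamard product that you mention as an alternative and that \cite{FFK} uses for the general composition rules.
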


Now, equipped with the singularity analysis toolkit, we are in a
position to find the asymptotic average from the relation
(\ref{recHada1}).

\begin{lemma}
\label{mean} The expected value of the total cost, induced by the
toll $n^2$ in the model of random free trees defined in Section
\ref{cayrec}, is
\begin{equation}
\label{asmean2}a_n=\sqrt{\pi/8}\ n^{5/2}+O(n^{3/2}).
\end{equation}
\end{lemma}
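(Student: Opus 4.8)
The plan is to read off the asymptotics of $a_n$ from the closed form (\ref{recHada1}) by singularity analysis, following the strategy of \cite{FK,FFK}. The key point is that $f(z):=A(z)\odot C(z/e)=\sum_n a_n\,c_n e^{-n} z^n$ inherits its dominant singularity $z=1$ from $C(z/e)$, so once I have the singular expansion of $f$ at $z=1$ I can extract $[z^n]f=a_n\,c_n e^{-n}$ and divide by the known factor $c_n e^{-n}$. Since $c_n=n^{n-1}/n!$, Stirling (equivalently, the $(1-z)^{1/2}$ term of (\ref{caygene}) applied to $C(z/e)$) gives $c_n e^{-n}=\frac{1}{\sqrt{2\pi}}\,n^{-3/2}(1+O(1/n))$, so the whole problem reduces to locating the leading singular term of $f$ and keeping $\Delta$-regularity throughout (Definition \ref{delta-r}).

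First I would put $b_n=n^2$, so $B(\omega)=\sum_n n^2\omega^n=\frac{\omega(1+\omega)}{(1-\omega)^3}$ has the exact local form $2(1-\omega)^{-3}-3(1-\omega)^{-2}+(1-\omega)^{-1}$, and I would expand $C(\omega/e)^2=1-2\sqrt2\,(1-\omega)^{1/2}+\cdots$ from (\ref{caygene}). The engine is the Hadamard product $B(\omega)\odot C(\omega/e)^2$, computed termwise via Lemma \ref{decomposition}: only the half-integer powers of $C(\omega/e)^2$ pair nontrivially with the integer powers of $B$, since $\mu_k^{(a,b)}$ carries a factor $1/\Gamma(-b)$ which vanishes whenever $b$ is a nonnegative integer. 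The most singular contribution is $(1-\omega)^{-3}\odot(1-\omega)^{1/2}$, and with $\mu_0^{(-3,1/2)}=-\tfrac18$ one gets $B(\omega)\odot C(\omega/e)^2\sim \frac{1}{\sqrt2}(1-\omega)^{-3/2}$.

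Next I would push this through the remaining operations in (\ref{recHada1}): differentiating in $\omega$ raises the order to $\frac{3}{2\sqrt2}(1-\omega)^{-5/2}$; dividing by $C(\omega/e)\to1$ leaves the leading coefficient unchanged; term-by-term integration $\int_0^z(1-\omega)^{\beta}\,d\omega\sim-\frac{(1-z)^{\beta+1}}{\beta+1}$ sends it back to $\frac{1}{\sqrt2}(1-z)^{-3/2}$; and multiplying by $\tfrac12\frac{C(z/e)}{1-C(z/e)}\sim\frac{1}{2\sqrt2}(1-z)^{-1/2}$ produces $f(z)\sim\frac14(1-z)^{-2}$. The transfer theorem gives $[z^n]f\sim\frac n4$, and dividing by $c_n e^{-n}$ returns $a_n\sim\frac{\sqrt{2\pi}}{4}n^{5/2}=\sqrt{\pi/8}\,n^{5/2}$, the announced main term.

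The main obstacle is the remainder, and it is genuinely delicate. Because the extraction divides $[z^n]f$ by a quantity of order $n^{-3/2}$, a subdominant term of $f$ of order $(1-z)^{-3/2}$ is amplified into a contribution of order $n^{2}$ in $a_n$; thus the bound $O(n^{3/2})$ requires the coefficient of $(1-z)^{-3/2}$ in $f$ to vanish. Checking this forces me to carry the tree function to order $(1-\omega)^{3/2}$, namely $C(\omega/e)=1-\sqrt2(1-\omega)^{1/2}+\frac23(1-\omega)-\frac{11\sqrt2}{36}(1-\omega)^{3/2}+\cdots$, to retain the $\mu_1^{(-3,1/2)}$, the $(a,b)=(-2,1/2)$ and the $b=3/2$ contributions in the Hadamard product, and to track the constant term of $\frac{C}{1-C}$ and the $(1-\omega)^{-2}$ cross term generated by dividing by $C$ through the integral. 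I would organise this around a uniform $O(|1-z|^{A})$ error in each expansion and an application of the transfer theorem to the error terms; this bookkeeping, rather than the leading-order computation, is where the real work lies, and it is precisely the step that pins down the exact form of the remainder.
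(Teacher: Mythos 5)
Your proposal follows the paper's proof essentially verbatim: the same closed form (\ref{recHada1}), the same application of Lemma \ref{decomposition} giving $B(z)\odot C(z/e)^2 = 2^{-1/2}(1-z)^{-3/2}+O(|1-z|^{-1})$, and the same chain of differentiation, division by $C(\omega/e)$, integration, and multiplication by $\tfrac12 C/(1-C)$ leading to $A(z)\odot C(z/e)=\tfrac14(1-z)^{-2}+O(|1-z|^{-3/2})$ and hence to the constant $\sqrt{2\pi}/4=\sqrt{\pi/8}$. Your closing observation about the remainder is in fact sharper than the paper itself: the paper stops at the error term $O(|1-z|^{-3/2})$, which after division by $c_n e^{-n}\asymp n^{-3/2}$ only yields $a_n=\sqrt{\pi/8}\,n^{5/2}+O(n^{2})$, so the finer bookkeeping you outline (verifying that the coefficient of $(1-z)^{-3/2}$ in $A(z)\odot C(z/e)$ vanishes) is precisely what would be needed to justify the stated $O(n^{3/2})$ --- a point the paper leaves implicit.
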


\begin{proof}Since $b_n=n^2$, we have $B(z)=Li_{-2,0}(z)$ and
the equality (\ref{Li 2}) implies
\begin{equation}
\label{Bz} B(z)=2(1-z)^{-3}+O(|1-z|^{-2}).
\end{equation}
Considering the singular expansion (\ref{caygene}) of the
generating function of the tree, Lemma \ref{decomposition} gives
$$B(z)\odot C(z/e)^2=2^{-1/2}(1-z)^{-3/2}+O(|1-z|^{-1}).$$ Consequently,
\begin{eqnarray*}
\int_0^z \frac{\partial_\omega[B(\omega)\odot
C(\omega/e)^2]}{C(\omega/e)}d\omega &=&
\int_0^z\left[\frac{3(1-\omega)^{-5/2}}{2\sqrt2}+O(|1-\omega|^{-2})\right]d\omega
\\
&=&\frac{1}{\sqrt 2}\,(1-z)^{-3/2}+O(|1-z|^{-1}).
\end{eqnarray*}

 Finally by the relation
(\ref{recHada1}) we have
\begin{equation}
\label{expan A.C}A(z)\odot
C(z/e)=\frac{1}{4}(1-z)^{-2}+O(|1-z|^{-3/2}).
\end{equation}
Moreover, for $\alpha$ positive, we have (see \cite{FO}, for
example)
\begin{eqnarray}
\nonumber
[z^n](1-z)^{-\alpha}&=&{{n+\alpha-1}\choose{n}}\frac{\Gamma(n+\alpha)}{\Gamma(\alpha)\Gamma(n+1)}
\\
\label{gamma}&=&
\frac{n^{\alpha-1}}{\Gamma(\alpha)}\left(1+O(1/n)\right),
\end{eqnarray}
where $[z^n](1-z)^{-\alpha}$ denotes the $n$-th coefficient of
$z^n$ in the expansion of $(1-z)^{-\alpha}$ in entire series. The
last equality is obtained applying the Stirling formula. Then, by
the expansion of (\ref{expan A.C}) and singularity analysis, we
obtain
$$a_nc_ne^{-n}=\frac{n}{4\Gamma(2)}(1+O(1/n))+O(n^{1/2}).$$
Finally with $c_n=\frac{n^{-3/2}e^{n}}{\sqrt{2\pi}}(1+O(1/n))$, we
obtain (\ref{asmean2}).\end{proof}

\vspace{0.6cm} Now estimating the moments of higher order, we
return to the recurrence (\ref{rec1}). For $k\ge 0, n\geq 1$, put
\[\mu_n(k):=\mathbb E({X_n}^k),\]
and
\[{{\tilde{\mu}}_{n}}(k):=c_n\,e^{-n} {{\mu}_n}(k).\]
Let ${M_k}(z)$ denote the ordinary generating function of
${{\tilde{\mu}}_n}(k)$, with $z$ marking $n$. For $k=1$,
\[{{\tilde{\mu}}_{n}}(1):=c_n\,e^{-n} {a_n}\hspace{0,5cm}\text{ and }\hspace{0,5cm}M_{1}(z)=A(z)\odot C(z/e).\]
For $k \geq 2$, we have
$$
{X_{n}}^k=
\sum_{k_1+k_2+k_3=k}{{k}\choose{k_1,k_2,k_3}}{X^{k_1}_{L_{n}}}{X^{k_2}_{n-L_{n}}}{{b_n}^{k_3}},
$$
or again
$${X_{n}}^k = {X^{k}_{L_{n}}}+{X^{k}_{n-L_{n}}}+\sum_{k_1+k_2+k_3=k\atop{k_1,k_2<k}}{{k}
\choose{k_1,k_2,k_3}} {X^{k_1}_{L_{n}}}{X^{k_2}_{n-L_{n}}}
{{b_n}^{k_3}}.$$ Conditioning on the size of $L_{n}$, we obtain
\begin{eqnarray*}
{\mu_n}(k)
 &=&
\sum_{k_1+k_2+k_3=k\atop{k_1,k_2<k}}{{k}\choose{k_1,k_2,k_3}}n^{2k_3}
\sum_{j=1}^{n}\frac{n}{2(n-1)}\frac{c_j c_{n-j}}{c_n}\mu_j
(k_1)\mu_{n-j} (k_2)
\\
\lefteqn{+\sum_{j=1}^{n}\frac{n}{2(n-1)}\frac{c_j
c_{n-j}}{c_n}(\mu_j (k)+\mu_{n-j} (k)).}
\end{eqnarray*}

Multiplying the latter by $\frac{n-1}{ne^n}c_n$, we obtain
\begin{equation}
\label{recmu} \frac{n-1}{n}{{\tilde{\mu}}_{n}}(k)
=\sum_{j=1}^{n-1}{\frac{c_{n-j}}{e^{n-j}} {{\tilde{\mu}}_{j}}(k)}
+r_n(k),
\end{equation}
where
$$r_n(k)=\sum_{k_1+k_2+k_3=k\atop{k_1,k_2<k}}{{k}\choose{k_1,k_2,k_3}}
{b_n}^{k_3}\sum_{j=1}^{n-1}{\frac{1}{2}
{{\tilde{\mu}}_{j}}(k_1)}{{\tilde{\mu}}_{n-j}}(k_2).$$

Let ${R_k}(z)$ denote the ordinary generating function of
$r_n(k)$,  with $z$ marking $n$. Therefore
\begin{equation}
\label{R.gene}{R_k}(z)=\sum_{k_1+k_2+k_3=k\atop{k_1,k_2<k}}{{k}\choose{k_1,k_2,k_3}}
(B(z)^{\odot k_3})\odot[1/2{M_{k_1}}(z){M_{k_2}}(z)],
\end{equation}
where
$$B(z)^{\odot k_3}:=\underbrace{B(z)\odot\dots\odot B(z)}_{{k_3}\text{ time}}.$$
Multiplying (\ref{recmu}) by $z^n$ and summing over $n\geq 1$, we
obtain
$$M_k (z)=\int_0^z M_k (\omega) \frac{d\omega}{\omega}M_k (z)C(z/e)+R_k (z),$$
which is identified in the equality (\ref{differ}) if
  there we choose $f(z)=M_k (z)$ and $t(z)=R_k (z)$. Finally, the
solution of this equation is
\begin{equation}
\label{recHada2}{M_k}(z)=\frac{C(z/e)}{1-C(z/e)}\int_0^z
\partial_\omega {R_k}(\omega)\frac{d\omega}{C(\omega/e)}.
\end{equation}

\vspace{.4cm}
\begin{proposition}
\label{prop} For $k \geq 1$, the generating function ${M_k}(z)$ of
${{\tilde{\mu}}_n}(k)$  satisfies
\begin{equation}
\label{expan M}{M_k}(z) = \frac{\sqrt 2}{2}{A_k}\ (1 - z)^{-5k/2+
\frac{1}{2}} + O(|1 - z|^{-5k/2+ 1}),
\end{equation}
where the coefficients $A_k$ are defined by the recurrence
\begin{equation}
\label{recprop}A_k = \sum_{j=1}^{k-1}{{{k}\choose{j}}\frac{A_j
A_{k-j}}{2}}+kA_{k-1} \frac{\Gamma(5k/2-1)}{\Gamma(5k/2-3)},
\hspace{0,5cm} k\geq 2;\hspace{0,5cm}A_1=2^{-3/2}.
\end{equation}
\end{proposition}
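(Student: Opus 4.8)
The plan is to prove the proposition by induction on $k$, transporting a singular expansion from $R_k$ to $M_k$ through the explicit solution (\ref{recHada2}). The base case $k=1$ is essentially done in Lemma \ref{mean}: since $M_1=A(z)\odot C(z/e)$, equation (\ref{expan A.C}) gives $M_1=\tfrac14(1-z)^{-2}+O(|1-z|^{-3/2})$, which is (\ref{expan M}) with $A_1=2^{-3/2}$ because $\tfrac{\sqrt2}{2}A_1=\tfrac14$. For the inductive step I would assume (\ref{expan M}) for all indices strictly less than $k$ and produce it for $k$.

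First I would record how the operator in (\ref{recHada2}) acts on a single singular power. Using (\ref{caygene}) one has $\tfrac{C(z/e)}{1-C(z/e)}\sim\tfrac1{\sqrt2}(1-z)^{-1/2}$ and $\tfrac1{C(\omega/e)}=1+O(|1-\omega|^{1/2})$, so for $\beta<0$ the map $R\mapsto\tfrac{C(z/e)}{1-C(z/e)}\int_0^z\partial_\omega R\,\tfrac{d\omega}{C(\omega/e)}$ carries $(1-z)^\beta$ to $\tfrac1{\sqrt2}(1-z)^{\beta-1/2}$ at leading order: the differentiation and integration cancel up to a constant, and the corrections coming from $\tfrac1{C}$ and from the lower endpoint are of strictly smaller singular order. (This is exactly the computation verified in the base case, where $R_1=B\odot[\tfrac12 C(z/e)^2]\sim 2^{-3/2}(1-z)^{-3/2}$ maps to $M_1\sim\tfrac14(1-z)^{-2}$.) Consequently it suffices to locate the dominant singularity of $R_k$: if $R_k\sim\rho_k(1-z)^{-5k/2+1}$, then $M_k\sim\tfrac{\rho_k}{\sqrt2}(1-z)^{-5k/2+1/2}$, which identifies $A_k$ with $\rho_k$ up to the normalisation in (\ref{expan M}).

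Then I would extract $\rho_k$ from (\ref{R.gene}). For a term indexed by $(k_1,k_2,k_3)$, combine the dominant exponent of $B^{\odot k_3}=Li_{-2k_3,0}$, namely $-(2k_3+1)$ by Lemma \ref{expan Li}, with that of $\tfrac12 M_{k_1}M_{k_2}$ through the $\mu_0$ branch of Lemma \ref{decomposition} (legitimate because the relevant exponent sum is negative). A short exponent count gives a dominant singularity $-5k/2+1+\tfrac{k_3}2$ when $k_1,k_2\ge1$ and $-5k/2+\tfrac12+\tfrac{k_3}2$ when exactly one of $k_1,k_2$ vanishes, so that the most singular order $-5k/2+1$ is produced by precisely two families: family (A) with $k_3=0$ and $k_1,k_2\ge1$, where the contribution is the ordinary product $\tfrac12 M_{k_1}M_{k_2}$ and the inductive hypothesis yields the convolution sum $\sum_{j=1}^{k-1}\binom kj(\cdots)A_jA_{k-j}$; and family (B) with $k_3=1$ and one index equal to $0$, where $B\odot[\tfrac12 C(z/e)M_{k-1}]$ is evaluated by Lemma \ref{decomposition}, the constant $\mu_0^{(-3,\,-5k/2+3)}=\tfrac{\Gamma(5k/2-1)}{\Gamma(3)\Gamma(5k/2-3)}$ combining with the leading coefficient $\Gamma(3)=2$ of $B$ (these $\Gamma(3)$'s cancel) and with the multinomial multiplicity $k$ to give the term $kA_{k-1}\tfrac{\Gamma(5k/2-1)}{\Gamma(5k/2-3)}$. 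Matching $\rho_k$ with $A_k$ then assembles the two pieces into the recurrence (\ref{recprop}).

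The main obstacle is the bookkeeping rather than any isolated idea. One must (i) confirm that no other $(k_1,k_2,k_3)$ reaches the dominant order — in particular the all-zero term $k_1=k_2=0,\ k_3=k$, whose singular exponent is $-2k+\tfrac12$, is subdominant only for $k\ge2$, and this forces a careful comparison of the integer exponents of $B^{\odot k_3}$ with the half-integer expansion of $C(z/e)$; (ii) compute every Hadamard constant exactly, since the $\lambda$-branch of Lemma \ref{decomposition} contributes only analytic terms while the $\mu$-branch must be tracked through the interplay of integer and half-integer powers; and (iii) justify the error term $O(|1-z|^{-5k/2+1})$, i.e. check that the next singularities of $R_k$ sit at $-5k/2+\tfrac32$ or beyond, that each summand is $\Delta$-regular so singularity analysis applies termwise, and that the operator (\ref{recHada2}) does not promote any subdominant contribution past the claimed order. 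Step (iii) — the uniform control of the remainder through the integral operator — is the part I expect to require the most care.
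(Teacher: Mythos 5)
Your proposal follows essentially the same route as the paper: induction with the base case taken from the expansion of $A(z)\odot C(z/e)$, transfer of the singular expansion of $R_k$ through the solution operator in (\ref{recHada2}) (which multiplies by $\tfrac{1}{\sqrt 2}(1-z)^{-1/2}$ at leading order), and a case analysis of (\ref{R.gene}) in which your families (A) and (B) are exactly the paper's dominant cases ($k_3=0$ with $k_1,k_2\ge 1$, and $k_3=1$ with one index zero), the remaining triples being shown subdominant by the same exponent count via Lemmas \ref{expan Li} and \ref{decomposition}. The plan is correct and the bookkeeping you flag (including the $k\ge 2$ caveat for the all-toll term) is precisely what the paper's cases (I)--(VI) carry out.
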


\begin{proof}\
 The proof is carried out by induction. For $k = 1$, the proposition has been
 established in view of (\ref{expan A.C}).
For $k\geq 2$, we demonstrate that ${R_k}(z)$ has a singular
expansion in the form
\begin{equation}
\label{expanR}{R_k}(z)={A_k}(1 - z)^{-5k/2+ 1} + O(|1 - z|^{-5k/2+
\frac{3}{2} }).
\end{equation}
Analyzing the various terms on the right hand side of
(\ref{R.gene}), we observe that $A_k$ are defined by the recurrence (\ref{recprop}):\\
\begin{itemize}
\item[{(I)}] \ By induction hypothesis, when $k_1$ and $k_2$ are
both nonzero, and $k_3=0$, the contribution to $R_k(z)$ is
\begin{eqnarray*}
\frac{1}{2}{M_{k_1}}(z){M_{k_2}}(z)&=&\frac{1}{2}\left[{A_{k_1}}(1
- z)^{\frac{-5{k_1}}{2}+ \frac{1}{2}} + O(|1-
z|^{\frac{-5{k_1}}{2}+ 1})\right]
\\
& \times & \left[{A_{k_2}}(1 - z)^{\frac{-5{k_2}}{2}+ \frac{1}{2}}
+ O(|1 - z|^{\frac{-5{k_2}}{2}+ 1})\right]
\\
&=&\frac{1}{2}{A_{k_1}}{A_{k_2}}(1 - z)^{\frac{-5{k}}{2}+ 1} +
O(|1 - z|^{\frac{-5{k}}{2}+ 3/2}).
\end{eqnarray*}

\vspace{.4cm} \item[(II)] \ When $k_1$, $k_2$ and $k_3$ are all
nonzero, by relation (\ref{Li 2}) and the relation below
\begin{eqnarray*}
\frac{1}{2}{M_{k_1}}(z){M_{k_2}}(z)=\frac{{A_{k_1}}{A_{k_2}}}{2\Gamma(\frac{5(k_1
+k_2)}{2}-1)}Li_{\frac{-5k}{2}+\frac{5k_3}{2}+2,0}(z)+O(|1 -
z|^{\frac{-5(k_1+k_2)}{2}+ 3/2}),
\end{eqnarray*}
and since $B(z)^{\odot k_3}=Li_{-2 k_3,0}(z)$, the contribution to
$R_k(z)$ is
\begin{eqnarray*}
Li_{-2k_3,0}(z)\odot[\frac{1}{2}{M_{k_1}}(z){M_{k_2}}(z)]
&=&\frac{{A_{k_1}}{A_{k_2}}}{2\Gamma(\frac{5(k_1
+k_2)}{2}-1)}Li_{\frac{-5k}{2}+\frac{k_3}{2}+2,0}(z)
\\
&+& Li_{-2 k_3,0}(z)\odot O(|1 - z|^{\frac{-5(k_1+k_2)}{2}+ 3/2})
\\
&=& O(|1 - z|^{\frac{-5k}{2}+ 3/2}).
\end{eqnarray*}

\vspace{.4cm} \item[(III)]Consider now the case where $k_1$ is
nonzero and where $k_2=0$. We have $M_0(z)=C(z/e)$. The
contribution to $R_k(z)$ is the ${{k}\choose{k_1}}$ times
\begin{eqnarray*}
\frac{1}{2}{M_{k_1}}(z){M_{k_2}}(z)&=&\frac{1}{2}\left[{A_{k_1}}(1
- z)^{\frac{-5{k_1}}{2}+ \frac{1}{2}} + O(|1 -
z|^{\frac{-5{k_1}}{2}+ 1})\right]
\\
&\times&\left[1-\sqrt 2 (1 - z)^{\frac{1}{2}}+ O(|1 - z|)\right]
\\
&=&\frac{A_{k_1}}{2\Gamma(\frac{5 k_1 }{2}-1/2)}Li_{\frac{-5
k_1}{2}+\frac{3}{2},0}(z)+O(|1 - z|^{\frac{-5{k_1}}{2}+ 1}).
\end{eqnarray*}
Since
 \begin{eqnarray*}
Li_{-2k_3,0}(z)\odot[\frac{1}{2}{M_{k_1}}(z){M_{k_2}}(z)]
&=&\frac{A_{k_1}}{2\Gamma(\frac{5 k_1 }{2}-1/2)}Li_{\frac{-5
k}{2}+\frac{k_3}{2}+\frac{3}{2},0}(z)
\\
&+& Li_{-2 k_3,0}(z)\odot O(|1 - z|^{\frac{-5 k_1}{2}+1}),
\end{eqnarray*}the contribution to $R_k(z)$, for $k_3\geq 2$, is
$$O(|1 - z|^{\frac{-5k}{2}+ {k_3}/2+1/2})=O(|1 - z|^{\frac{-5k}{2}+
3/2}).$$

\vspace{.4cm}
 \item[(IV)]In the case where $k_1$ is nonzero,
 $k_2=0$ and $k_3=1$, the contribution to $R_k(z)$ is
${{k}\choose{k-1}}=k$ times
$$\frac{A_{k-1}\Gamma(\frac{5 k }{2}-1)}{2\Gamma(\frac{5 k }{2}-3)}=(1 -
z)^{\frac{-5 k}{2}+ 1}+ O(|1 - z|^{\frac{-5 k}{2}+ 3/2}).$$
\vspace{.2cm}
 \item[(V)] The case where $k_2$ is nonzero and $k_1 = 0$
is identical to two preceeding cases.

 \vspace{.4cm}
 \item[(VI)]The last
contribution comes from the single term when both $k_1$ and $k_2$
are zero. In this case, the contribution to $R_k(z)$ is
\begin{eqnarray*}
{B}(z)^{\odot k}\odot [\frac{1}{2} C(\frac{z}{e})^2] &=&
Li_{-2k,0}(z)\odot \left(1/2-\sqrt 2
(1-z)^{\frac{1}{2}}+O(|1-z|)\right)
\\
&=&  Li_{-2k,0}(z)\odot \left(-\frac{\sqrt 2}{\Gamma(-1/2)}
Li_{3/2,0}(z)+O(1)\right)
\\
&=& O(|1-z|^{-2k+3/2-1})=O(|1-z|^{-5k/2+3/2}).
\end{eqnarray*}
\end{itemize}
\vspace{.4cm} Adding all these six contributions yields the
expansion (\ref{expanR}), as well as the recurrence formula
(\ref{recprop}). Utilizing (\ref{expanR}) in (\ref{recHada2}), we
finally obtain the expansion (\ref{expan M}).
\end{proof}

 \vspace{0.2cm}
 \noindent
\section{Proof of Theorem \ref{Xn}}
\label{limdis}
According to Proposition \ref{prop}, the generating function
${M_k}(z)$ of $\left({c_n}e^{-n}{\mu}_n(k)\right)_{k\geq 1}$ has
the singular expansion
$${M_k}(z) = \frac{\sqrt 2}{2}{A_k}(1 - z)^{-5k/2+
\frac{1}{2}} + O(|1 - z|^{-5k/2+ 1}),$$ where $A_k$ satisfy the
recurrence (\ref{recprop}). Thus, having
\[\frac{c_n}{e^n}=\frac{n^{-3/2}}{\sqrt{2\pi}}(1+O(1/n)),\]
in view of (\ref{gamma}) and the techniques of singularity
analysis, we obtain
\begin{equation}
\label{mu}{\mu}_n(k)=\frac{A_k \sqrt \pi}{\Gamma(\frac{5k-1}{2})}\
n^{5k/2}+O(n^{5k/2-1/2}).
\end{equation}
We will utilize this estimate of the $k$-th moment to derive from
it the limit distribution of our additive functional. From
(\ref{mu}) we obtain, for $k\geq 1$,
\begin{equation}
\label{tense}\mathbb E\left[\left(n^{-5/2}\,X_n\right)^k\right]
=\frac{A_k \sqrt\pi}{\Gamma(\frac{5k-1}{2})}+O(n^{-1/2}).
\end{equation}
Once we prove the following
 lemma, the hypothesis of \cite[Theorem
30.1]{Bil}, is verified and we can be sure that the suite of
$\frac{A_k \sqrt\pi}{\Gamma(\frac{5k-1}{2})}$ characterizes a
unique probability law.
\begin{lemma}
\label{const1} There exist a constant $C<\infty $ such that
$$\left|\frac{A_k}{k!}\right|\leq C^k k^{5k/2},$$ for all
$k\geq 1$.
\end{lemma}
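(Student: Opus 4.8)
The plan is to pass from $A_k$ to the normalized coefficients $a_k := A_k/k!$, which turns the binomial convolution in (\ref{recprop}) into an ordinary one. Dividing (\ref{recprop}) by $k!$ and using $\binom{k}{j}/k! = 1/(j!\,(k-j)!)$ together with the identity $\Gamma(5k/2-1)/\Gamma(5k/2-3) = (5k/2-2)(5k/2-3)$, I get
$$a_k = \frac12\sum_{j=1}^{k-1} a_j\,a_{k-j} + \Big(\tfrac{5k}{2}-2\Big)\Big(\tfrac{5k}{2}-3\Big)a_{k-1},\qquad k\ge 2,$$
with $a_1 = 2^{-3/2}$. In this notation the claim is exactly $|a_k|\le C^k k^{5k/2}$, which I would establish by strong induction on $k$, fixing the constant $C$ at the end.

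The decisive point is to control the convolution sum under the hypothesis $|a_j|\le C^j j^{5j/2}$ for all $j<k$. Setting $g(j):= j\log j + (k-j)\log(k-j)$ we have $j^{5j/2}(k-j)^{5(k-j)/2} = \exp\!\big(\tfrac52 g(j)\big)$, and since $g'(j)=\log\!\big(j/(k-j)\big)$ is negative on $(0,k/2)$ and positive on $(k/2,k)$, the function $g$ is convex and its maximum over $j\in\{1,\dots,k-1\}$ is attained at the two endpoints, where $g=(k-1)\log(k-1)$. Hence every summand is at most $(k-1)^{5(k-1)/2}$ and the crude bound
$$\sum_{j=1}^{k-1} j^{5j/2}(k-j)^{5(k-j)/2}\ \le\ (k-1)\,(k-1)^{5(k-1)/2}$$
holds. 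This endpoint domination, which says that the quadratic (convolution) term contributes only at the scale of the single product $a_1 a_{k-1}$ up to a polynomial factor, is the step I expect to be the real content of the lemma.

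Combining the two terms of the recurrence, using $(5k/2-2)(5k/2-3)\le \tfrac{25}{4}k^2$, and factoring out $C^{k-1}(k-1)^{5(k-1)/2}$, the inductive step reduces to the scalar inequality
$$\frac{C}{2}(k-1) + \frac{25}{4}k^2\ \le\ C\,\frac{k^{5k/2}}{(k-1)^{5(k-1)/2}}.$$
Because $k^{5k/2}/(k-1)^{5(k-1)/2} = k^{5/2}\,(1+\tfrac{1}{k-1})^{5(k-1)/2}\ge k^{5/2}$, it is enough to verify $\tfrac{C}{2}(k-1)+\tfrac{25}{4}k^2\le C\,k^{5/2}$. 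Here one sees precisely why the exponent $5/2$ suffices: the linear part of the recurrence injects only a factor $k^2$, and $k^{5/2}$ outgrows it. Taking $C$ large enough (for instance $C=\tfrac{25}{2}$, which also dominates $|a_1|$ at the base case) makes this inequality valid for every $k\ge 2$ and closes the induction; any finitely many small values of $k$ are swallowed by enlarging $C$ if necessary. The only genuinely delicate ingredient is the convexity/endpoint estimate for the convolution; the rest is elementary bookkeeping of powers.
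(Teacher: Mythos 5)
Your proof is correct and takes essentially the same route as the paper's: set $s_k=A_k/k!$, induct, bound the convolution sum by noting that $j^{5j/2}(k-j)^{5(k-j)/2}$ is maximized at the endpoints $j\in\{1,k-1\}$, and let the gain of $k^{5/2}$ absorb the $k^2$ coming from the linear term. Your cruder bound $(k-1)\cdot(k-1)^{5(k-1)/2}$ for the sum still closes the induction, and your bookkeeping is actually tidier than the paper's, which at one step replaces $(k-1)^{5(k-1)/2}k^2$ by $k^{5(k-1)/2}$ (an inequality that is false as written, though harmless for the conclusion).
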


\begin{proof}
The demonstration is by induction. For $k\in\{1,2\}$, the
inequality is satisfied, if we choose the constant $C$
sufficiently large. For $k\geq 2$, putting $s_k:=\frac{A_k}{k!}$
and dividing the recurrence (\ref{recprop}) by $k!$, we obtain
\begin{eqnarray*}
s_k &=& \frac 1 2 \sum_{j=1}^{k-1}s_j s_{k-j} + s_{k-1}\ (5k/2
-2)(5k/2 -3)
\\
&\le& \frac 1 2 \sum_{j=1}^{k-1}s_j s_{k-j} + \gamma\ s_{k-1}\
k^2,
\end{eqnarray*}
for $\gamma=25/4$. By the induction hypothesis,
$$|s_k|\ \leq \ \frac {C^k}2\ \sum_{j=1}^{k-1}\ |j^j
(k-j)^{k-j}|^{5/2} + \gamma\ C^{k-1}\ (k-1)^{\frac{5(k-1)}2}\
k^2.$$ Since, for $0<j\leq k/2$, the term $j^j (k-j)^{k-j}$
decrease when $j$ grows, we can limit the sum, considering the sum
for $j=1$, $j=k-1$ and $k-2$ times $j=2$. Then, for $k\ge 3$,
\begin{eqnarray*}
 \left|s_k\right| &\leq& \frac {C^k}2
[(k-1)^{k-1}+2(k-2)^{k-1}]^{5/2}+ \gamma C^{k-1}
k^{\frac{5(k-1)}2}
\\
& \leq & \frac {C^k}2 (3k^{k-1})^{5/2}+C^k \frac{\gamma}C
k^{\frac{5(k-1)}2}
\\
& \leq & C^{k}\ k^{\frac{5k}2},
\end{eqnarray*}
where the last inequality justified when we choose $C\ge
2\gamma\,3^{-5/2}$. \end{proof}

\vspace{.3cm} It follows from Lemma \ref{const1} that, for $B$
sufficiently large,
\begin{equation}
\label{const2}\left|\frac{A_k
\sqrt\pi}{k!\Gamma(\frac{5k-1}{2})}\right|\leq B^k,
\end{equation}

and by \cite[Theorem 30.1]{Bil}, there exists a unique probability
distribution having the moments $\frac{A_k
\sqrt\pi}{k!\Gamma(\frac{5k-1}{2})}$. Let $Y$ be a random variable
having such a probability distribution. We deduce that
$$n^{-5/2}\,X_n\ \build{\longrightarrow }{}{\mathcal L}\ Y.$$
Putting $\xi=\frac{Y}{\sqrt 2}$ and ${\bar
a}_k=\frac{2^{3k-1}}{k!}A_k$, we obtain $$\mathbb
E(\xi^k)=\frac{k!\sqrt
\pi}{2^{(7k-2)/2}\Gamma(\frac{5k-1}{2})}{\bar a}_k,$$ and
$${\bar a}_k=2(5k-6)(5k-4){\bar a}_{k-1}+\sum_{j=1}^{k-1}{{\bar a}_j
{\bar a}_{k-j}}\hspace{0,5cm} k\geq 2;\hspace{0,5cm}{\bar
a}_1=\sqrt 2,$$ what is the statement of Theorem \ref{Xn}.


\section*{Acknowledgments}
I wish to thank Philippe Chassaing for the fruitful discussions
and the referee for his valuable comments that improved the
presentation of this article.

School of mathematics and computer sciences, Damghan university,
Iran, p.o.Box 36716-41167\\
 Email: zohorian@du.ac.ir

\end{document}